\newtheorem{theorem}{\large Theorem} 
\newtheorem{proposition}[theorem]   {\large Proposition}
\newtheorem{lemma}[theorem]{\large Lemma}
\newtheorem{remark}[theorem]{\large Remark}
\def\EE{{\Bbb E}}
\def\ZZ{{\Bbb Z}}
\def\sqr#1#2{{\vcenter{\vbox{\hrule height.#2pt
            \hbox{\vrule width.#2pt height#1pt \kern#1pt
            \vrule width.#2pt}
            \hrule height.#2pt}}}}
\def\abstract#1{
\goodbreak\bigskip\bigskip\noindent{\bf Abstract.}
{\it\narrower  #1\smallskip} \medskip}
\def\ni{{\noindent}}
\def\sobre#1#2{\lower 1ex \hbox{ $#1 \atop #2 $ } }
\def\'#1{{\if #1i{\accent"13\i}\else {\accent"13 #1}\fi}}
\def\bajo#1#2{\raise 1ex \hbox{ $#1 \atop #2 $ } }
\title{This is the title of the paper}
\author{Firstname Lastname, second author}
\begin{document} 

\centerline{\bf $R-$positivity of matrices and Hamiltonians }
\centerline{\bf on nearest neighbors trajectories}

\bigskip

\centerline{\bf Jorge Littin}
\centerline{ DIM and CMM Universidad de Chile, UMI 2807 CNRS}
\centerline{ Casilla 170-3, Santiago, Chile,
e-mail: jlittin@dim.uchile.cl}

\medskip

\centerline{\bf Servet Mart\'{\i}nez}
\centerline{DIM and CMM Universidad de Chile, UMI 2807 CNRS}
\centerline{Casilla 170-3, Santiago, Chile,
e-mail: smartine@dim.uchile.cl}

\abstract{ We revisit the $R-$positivity of nearest
neighbors matrices on ${\ZZ_+}$ and the Gibbs 
measures on the set of nearest neighbors trajectories on ${\ZZ_+}$
whose Hamiltonians award either visits to sites a or visits to edges. 
We give conditions that guarantee the $R-$positivity or 
equivalently the existence of the 
infinite volume Gibbs measure, and we show geometrical recurrence of 
the associated Markov chain. In this work we generalize and 
sharpen results obtained in \cite{fm} and \cite{hmsm}.} 

\medskip

\ni{\bf Keywords}: Nearest neighbors, $R-$ positivity, Gibbs 
measures.  

\medskip

\ni{\bf AMS 2010 subject classification}: 60J10, 60K35, 82B.

\medskip

\section{$R-$ positivity and main result}

Let $Q=(q_{x,y}:x,y\in\ZZ_+)$ be a nearest neighbors matrix on $\ZZ_+$, 
i.e.
\begin{equation}
\label{eq1.9}
q_{x,y}=0\hbox{ if }|x-y|\neq 1\hbox{ and } q_{x,y}>0\hbox{ if }
|x-y|=1.
\end{equation}
From irreducibility we get that
$$
R(Q)=\left(\limsup\limits_{N\to\infty}
\left(q^{(2N)}_{x,x}\right)^{1/2N}\right)^{-1}\,,
$$
is a common convergence radius, i.e. it is independent of 
$x\in\ZZ_+$. In this work we will assume $R(Q)>0$. Let us put $R=R(Q)$. 

\medskip

The matrix $Q$ is said to be $R$-recurrent if $\sum\limits_{n\ge 0}
q^{(2N)}_{x,x}R^n=\infty$, otherwise it is called $R$-transient.
Consider the continued fraction
$$
H(Q,R):={\displaystyle {q_{0,1} q_{1,0}/R^2
\over{\displaystyle 1-{q_{1,2} q_{2,1}/R^2\over
{\displaystyle \cdots \atop{ \displaystyle 1 -
{q_{x,x+1} q_{x+1,x}/R^2 \over \cdots }}}}}}}\,.
$$
For the class of matrices $Q$ of type (\ref{eq1.9}),
it was shown in (\cite{fm2})
that $Q$ is $R-$recurrent if and only if $H(Q,R)=1$
and it is $R-$transient when $H(Q,R)<1$ (the proof uses 
strongly Theorem 11.2 of Wall in \cite{w}). 
We have that there exists a solution to the eigenvector problem:
$$
Q\vec f=R^{-1}\vec f\hbox{ with }\vec f=(f_x:x\in\ZZ_+)>0\,.
$$
(For general positive matrices the existence of a solution is 
only guaranteed for $R-$recurrent matrices).
Note that the matrix $P^{(R)}=(p_{x,y}:x,y\in\ZZ_+)$ defined by
$$
p_{x,y}=R q_{x,y}\frac{f_y}{f_x}\quad x,y\in\ZZ_+ ,
$$
is an stochastic matrix of a birth and death chain $X^{Q}=(X^{Q}_n: 
n\ge 0)$
reflected at $0$. The matrix $Q$ is said to be $R$-positive
recurrent if $P^{(R)}$ is a positive recurrent Markov chain.
(For the previous definitions and results see Vere-Jones
in \cite{v1} and \cite{v2}).

\medskip

For $x\ge 0$ we denote by $\tau_x:=\inf\{n>0: X^{Q}_n=x\}$
the return time to $x$, where as usual we put $\tau_x=\infty$
when $X^{Q}_n\neq x$ for all $n>0$.

\medskip

Denote $J^{[m]}=\{x\in \ZZ_+: x\ge m\}$. Let us consider the family of 
matrices 
\begin{equation}
\label{eqseq}
Q^{[m]}=(q_{x,y}: x,y\in J^{[m]})\,, \; m\ge 0\,. 
\end{equation}
With this notation $Q^{[0]}=Q$ the original matrix. 
Denote by
$R^{[m]}:=R(Q^{[m]})$ the convergence radius of $Q^{[m]}$, and
by $X^{[m]}:=X^{Q^{[m]}}=(X^{[m]}_n: n\ge 0)$ the associated 
stochastic matrix.
 By definition the sequence $(R^{[m]}: m\ge 0)$ is increasing:
$$
\forall m\ge 0\,:\quad\quad R^{[m]}\leq R^{[m+1]}\,,
$$
so we can only have two situations:

\medskip

\noindent $\bullet$ The sequence is constant: $R^{[0]}=R^{[m]}$,
$\forall m\geq 0$;

\medskip

\noindent $\bullet$  The sequence has a gap, so there exists
some $m\ge 0$ such that $R^{[m]}<R^{[m+1]}$.

\medskip

Our main result is:

\begin{theorem}
\label{theo1.1}
Let $Q$ matrix of type (\ref{eq1.9}). Assume $R(Q)>0$.  
If there exists a gap, 
$R^{[m]}<R^{[m+1]}$ for some $m\ge 0$, then the matrix
$Q$ is $R-$positive. Moreover the associated birth and death chain 
$X^Q$ is geometrically recurrent: 
$$
\forall \, 1< \theta \le {R^{[m+1]}}^2 /{R^{[m]}}^2
\,:
\quad \;
\EE_x(\theta^{\tau_x})<\infty\,.
$$

When there does not exists a gap, $R^{[0]}=R^{[m]}$ for all 
$m\ge 0$, then the matrix $Q^{[1]}$ is $R-$transient.
\end{theorem}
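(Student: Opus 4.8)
The plan is to run the classical continued-fraction machinery for nearest-neighbour matrices and then read off both conclusions from a singularity analysis of the resulting power series. For each $m\ge 0$ introduce the diagonal generating function $g_m(z):=\sum_{N\ge0}q^{[m],(2N)}_{m,m}\,z^{N}$, whose radius of convergence is $(R^{[m]})^{2}$ and all of whose coefficients are strictly positive (so, by Pringsheim, $(R^{[m]})^{2}$ is the only singularity of $g_m$ on that circle). Cutting an excursion of $Q^{[m]}$ at the site $m$ at its successive returns to $m$ gives the recursion $g_m=(1-z\,c_m\,g_{m+1})^{-1}$ with $c_m:=q_{m,m+1}q_{m+1,m}>0$ — this is exactly the finite-truncation content of Wall's theorem already invoked for $H(Q,R)$. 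Write $D_m(z):=1-z\,c_m\,g_{m+1}(z)$, and note that $z\mapsto z\,c_m\,g_{m+1}(z)$ is real-analytic and strictly increasing on $[0,(R^{[m+1]})^{2})$.

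Now suppose there is a gap, $R^{[m]}<R^{[m+1]}$. Then $g_{m+1}$ is analytic on a disc strictly containing $\{|z|\le(R^{[m]})^{2}\}$, so the (necessary) singularity of $g_m$ at $(R^{[m]})^{2}$ can only come from $D_m((R^{[m]})^{2})=0$; by the strict monotonicity that zero is simple, hence $g_m$ has a simple pole there, and the standard Tauberian/singularity argument gives $q^{[m],(2N)}_{m,m}(R^{[m]})^{2N}\to C>0$, i.e.\ $Q^{[m]}$ is $R^{[m]}$-positive. The simple pole also forces $g_m(z)\to+\infty$ as $z\uparrow(R^{[m]})^{2}$; substituting this into $D_{m-1}$ makes $D_{m-1}$ vanish inside $(0,(R^{[m]})^{2})$, so $R^{[m-1]}<R^{[m]}$. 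Iterating downwards, a gap at $m$ forces a gap at every $j\le m$, and the $j=0$ instance of the computation above shows that $Q=Q^{[0]}$ is $R$-positive, equivalently (Vere-Jones) that $P^{(R)}$ is positive recurrent. The converse is the same computation run in reverse: if $R^{[m]}=R^{[0]}=R$ for all $m$, then $g_0,g_1$ both have radius $R^{2}$, and were $D_0$ to vanish in $[0,R^{2})$ or $g_1$ to blow up at $R^{2}$ we would manufacture a gap; hence $z\,c_0\,g_1(z)\le1$ on $[0,R^{2})$, so $g_1(R^{2})=\lim_{z\uparrow R^{2}}g_1(z)\le(R^{2}c_0)^{-1}<\infty$, i.e.\ $Q^{[1]}$ is $R$-transient.

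For geometric recurrence I would use the $h$-transform identity $p^{(R)}_{u,v}=R\,q_{u,v}f_v/f_u$: a loop of length $2N$ carries $P^{(R)}$-weight $R^{2N}$ times its $Q$-weight, so the return generating function of $X^{Q}$ at a site $x$ is a $Q$-return generating function evaluated at $z=R^{2}\theta$. Splitting a $Q$-excursion at $x$ into a left part absorbed in the finite block $\{0,\dots,x-1\}$ and a right part absorbed in $J^{[x+1]}$, hence governed by $g_{x+1}$, and using from the second step that $g_{x+1}$ extends analytically past $(R^{[m]})^{2}$ up to $(R^{[m+1]})^{2}$, one gets that this generating function converges for $R^{2}\theta<(R^{[m+1]})^{2}$, that is $\EE_x(\theta^{\tau_x})<\infty$ for $\theta\le(R^{[m+1]})^{2}/(R^{[m]})^{2}$. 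The cleanest way to nail the constant is instead a Lyapunov function: take $V(y)=w_y/f_y$ with $\vec w>0$ solving $Q^{[m+1]}\vec w=(R\theta)^{-1}\vec w$ on $J^{[m+1]}$ (available because $R\theta\le R^{[m+1]}$) and patched over $\{0,\dots,m\}$, so that $\theta^{n}V(X_n)$ is a supermartingale up to $\tau_x$; optional stopping then yields the moment bound.

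The first two steps are routine once one has the recursion $g_m=1/D_m$ and the monotonicity of $D_m$; the real work is the third, namely pinning down the sharp exponential rate. One must check that the zero of $D_m$ sits \emph{exactly} at $(R^{[m]})^{2}$ (not merely somewhere inside $(0,(R^{[m+1]})^{2})$), deal with the endpoint $\theta=(R^{[m+1]})^{2}/(R^{[m]})^{2}$ — which requires $Q^{[m+1]}$ to be $R^{[m+1]}$-transient, a fact one has to squeeze out of the gap hypothesis — and make sure that neither the finite left-block nor the change of base point between $R$ and $R^{[m]}$ spoils the stated rate; this last bookkeeping is where I expect the main difficulty to lie.
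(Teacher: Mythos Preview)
Your generating-function route is correct for the first two conclusions but genuinely different from what the paper does. The paper never analyses singularities of $g_m$; its engine is the algebraic identity
\[
\omega^{[m]}_x\bigl(1-\omega^{[m]}_{x+1}\bigr)=\xi_m\,\omega^{[m+1]}_x\bigl(1-\omega^{[m+1]}_{x+1}\bigr),\qquad \xi_m=(R^{[m]}/R^{[m+1]})^2,
\]
which is immediate from $\omega^{[m]}_x(1-\omega^{[m]}_{x+1})=(R^{[m]})^2 q_{x,x+1}q_{x+1,x}$. Since every first-return loop of $X^{[m]}$ at $m+1$ through $\{x\ge m+1\}$ has weight a product of such factors, this yields the exact path identity $\PP^{[m]}_{m+1}(\tau=2k)=\xi_m^{\,k}\,\PP^{[m+1]}_{m+1}(\tau=2k)$ for $k\ge2$, and the exponential moment is then a one-line sum bounded by $\PP^{[m+1]}_{m+1}(\tau<\infty)\le1$. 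The downward propagation of the gap is argued via the continued fractions $h(s;m,\infty)$ rather than via divergence of $g_m$. Your Pringsheim/simple-pole argument is a clean alternative for $R$-positivity and for the no-gap $R$-transience of $Q^{[1]}$ (and is arguably more transparent there), but it buys you less on the geometric-recurrence side.

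On that third step your sketch has a real issue, not just bookkeeping. The splitting argument you outline controls $\EE_x(\theta^{\tau_x})$ for $X^{[0]}$ through $g_{x+1}$, whose radius is $(R^{[x+1]})^2$, not $(R^{[m+1]})^2$; so what you actually obtain at the origin is the range governed by the gap at level $0$ (which you have established), not the ratio $(R^{[m+1]}/R^{[m]})^2$ stated in the theorem. The paper sidesteps this because its identity lives at level $m$ and delivers the bound there directly; only then does it use that a gap at $m$ forces gaps below. Your Lyapunov proposal could close the distance, but note that the eigenvector $\vec w$ you want on $J^{[m+1]}$ exists only for $r\ge R^{[m+1]}$, i.e.\ $R\theta\ge R^{[m+1]}$, so it recovers exactly the endpoint, and patching over $\{0,\dots,m\}$ while keeping the supermartingale inequality needs an argument you have not supplied.
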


We point our that this result generalizes the result on \cite{hmsm}
in two directions: the matrix $Q$ is not 
necessarily substochastic as is the case in \cite{hmsm}
and on the other hand we can sharpen $R-$positivity to
geometrically recurrence of the associated Markov chain.
The tools of this work are close to those used in \cite{fm}.

\section{\bf Hamiltonians and Gibbs measures}

Let us put our result in the context of one-dimensional 
Gibbs measures.
We consider the space of trajectories $\Omega$ of a
nearest neighbors non negative random walk,
$$
\Omega =\{w\in\ZZ^\ZZ_+ :
|w_i-w_{i+1}|=1\ \forall i\in\ZZ\}.
$$
For $i\le j$ we put $w[i,j]=(w_i,\dots,w_j)$ and 
$\Omega [i,j]=\{\sigma = w[i,j]: w\in\ZZ^\ZZ_+ \}$.
Let $\vec\alpha=(\alpha_x:x\in\ZZ_+)$,
$\vec b=(b_x:x\in\ZZ_+)$, $\vec c=(c_x:x\in\ZZ_+)$
be fixed sequences (the rewards).
For $x,y \in\ZZ_+$ and a block $\sigma\in\Omega[i,j]$ we denote by
$$
{\cal N}_x(\sigma)=\sum\limits^j_{k=i}\delta(\sigma_k,x)
\hbox{ and }
{\cal N}_{x,y}(\sigma)=
\sum\limits^{j-1}_{k=i}\delta(\sigma_k\sigma_{k+1},xy),
$$
the number of times $\sigma$ visits $x$ and
the number of times $\sigma$ passes through the edge $xy$
respectively, this last quantity vanishing if $|x-y|\neq 1$.
The Hamiltonians that respectively award the number of visits to 
the sites or to the edges, are the following ones
on the interval $[i,j]$. For $w\in\Omega$, 
\begin{eqnarray*}
H_{[i,j]}^{\vec\alpha}(w)&=&\sum\limits_{x\in\ZZ_+} \alpha_x{\cal N}_x
(w[i,j+1]) \hbox{ and }\\
H_{[i,j]}^{\vec b,\vec c}(w)&=& \sum\limits_{x\in\ZZ_+}
 \Big(b_x{\cal N}_{x,x+1}(w[i-1,j+1]) +c_x {\cal N}_{x+1,x}(w[i-1,j+1]) 
\Big) \,.
\end{eqnarray*}
 For the Hamiltonians $H=H^{\vec\alpha}$ and $H=H^{\vec b,\vec c}$,
the probability measures associated to them are
(see \cite{g} Definition 2.9)
$$
\mu_{[i,j]}^H(w)(\sigma)=(Z_{[i,j]}(w))^{-1}
\sum\limits_{\displaystyle w'[i-1,j+1]: w'[k,\ell]=\sigma
\atop{\displaystyle w'_{i-1}=w_{i-1}, w'_{j+1}=w_{j+1}}}\!\!\!
e^{H_{[i,j]}(w')}\,,\;\; \sigma\in\Omega[k,\ell],
$$
where $i<k, \ell<j$ and $Z_{[i,j]}(w)=
\sum\limits_{w'[i-1,j+1]:\atop{w'_{i-1}=w_{i-1},w'_{j+1}=w_{j+1}}}
e^{H_{[i,j]}(w')}$ is
the partition function of $w\in\Omega$ in $[i,j]$. 
We ask for the conditions on $\vec \alpha$, or in $\vec b$ and $\vec c$, 
such that the Hamiltonians $H^{\vec \alpha}$, or $H^{\vec b,\vec c}$,
define translational invariant Gibbs measures. 

\medskip

These Hamiltonians are related by the following
equalities shown in \cite{fm}: when  the sequences $\vec \alpha$, $\vec 
b$, $\vec c$ verify $\alpha_x+\alpha_{x+1}= b_x+c_x$ for $x\in\ZZ_+$
then there exists a real function $\gamma(n,m,p)$ defined in $\ZZ^3_+$ 
such that
$$
H^{\vec b, \vec c} _{[i,j]}(w)= 
H^{\vec\alpha}_{[i,j]}(w)+ \gamma(w_{i-1},w_{j+1},j-i) .
$$
In particular this relation implies
$$
\mu^{H^{\vec b, \vec c}}_{[i,j]}(w)(\sigma) =\mu^{H^{\vec \alpha}}
_{[i,j]}(w)(\sigma)\hbox{ for any }\sigma\in \Omega[k,\ell],
\hbox{ with } [k,\ell] \subseteq [i+1,j-1] .
$$
 From this result we can restrict ourselves to analyze when the 
Hamiltonian $H^{\vec b, \vec c}$ defines an infinite volume Gibbs 
measure because we can always fit  $\vec b$ and $\vec c$ to have 
$\alpha_x+\alpha_{x+1}= b_x+c_x$ for $x\in\ZZ_+$. Associated to the 
Hamiltonian $H^{\vec b, \vec c}$ is the
transfer matrix $Q=(q_{x,y}: x,y\in \ZZ_+)$ of type (\ref{eq1.9}) 
where $q_{x,x+1} = e^{b_x}$ and $q_{x+1,x} = e^{c_x}$ for
$x\ge 0$; and $q_{x,y}=0$ otherwise. We have,
$$
\mu^{H^{\vec b, \vec c}}_N(w)(\sigma)=
{Q^{N-k+1}(w_{-(N+1)},\sigma_{-k})\prod\limits^{k-1}_{i=-k}Q(\sigma_i,
\sigma_{i+1})
Q^{N-k+1}(\sigma_k,w_{N+1})\over Q^{2N+2}(w_{-(N+1)},w_{N+1})}\,.
$$
 From Theorem 1 in Kesten \cite{k}
for strictly positive matrices and extended in Theorem C in 
\cite{gu} for irreducible matrices, we have
that there exists a unique translational invariant Gibbs
state for the Hamiltonian $H^{\vec  b,\vec c}$ if and
only if $Q$ is a $R$-positive matrix.

\medskip

Therefore Theorem \ref{theo1.1} 
give a sufficient condition for the general case of
$\vec b$ and $\vec c$ in order that there exists a unique translational 
invariant Gibbs state for the Hamiltonian $H^{\vec  b,\vec c}$. This 
result goes beyond the cases analyzed in \cite{fm}.  
We recall that when $\vec b+ \vec c$ is constant for $x$ sufficiently
large (that is the sequence is ultimately constant) in 
Theorems 1.2 and Theorem 1.4 in \cite{fm}, 
it was given necessary and sufficient explicit conditions for the 
existence of a Gibbs measure in terms of $\vec b+ \vec c$. 
For awards on sites and when $\alpha_{x+2}\le \alpha_x$
for $x$ sufficiently large, in \cite{fm} there was supplied sufficient 
conditions for the existence of a Gibbs measure.
All these conditions were written in terms of continued fractions.
We finally point out that in \cite{fm} it was also discussed the 
relations between the results on Hamiltonians awarding the visits to 
sites with the entropic repulsion of a wall and with the SOS model.

\section{ Proof of the main result }

Let $\vec f=(f_x:x\in\ZZ_+)$. We consider the general eigenvalue problem
for matrices of type (\ref{eq1.9}):
\begin{equation}
\label{eq2.1}
Q\vec f=r^{-1} \vec f\hbox{ for } r>0,\, \vec f>0\,.
\end{equation}
For $r\in [R,\infty)$ there is a unique, up to a
homothetic transformation, $\vec f >0$ verifying (\ref{eq2.1}).
Moreover if for some $r>0$ there exists a solution to (\ref{eq2.1})
then necessarily $r\in [R,\infty)$ (see \cite{fm}).

\medskip

Let $r\in [R,\infty)$. The matrix 
$P^{(r)}=(p_{x,y}:x,y\in\ZZ_+)$ defined by
\begin{equation}
\label{eq2.2}
p_{x,y}=r {f_y\over f_x} q_{x,y}\hbox{ for }x,y\in\ZZ_+^*
\end{equation}
is a stochastic matrix of a birth-death chain reflected
at $0$. We have that $P^{(r)}$ is transient for all $r\in (R, \infty)$. 

\medskip

Let us put
$$
\omega_x\doteq p_{x,x+1}=r {f_{x+1}\over f_x}q_{x,x+1}. 
$$
 From (\ref{eq2.2}) the sequence $(w_x:x\in\ZZ_+)$ verifies the equation:
\begin{equation}
\label{eq2.4}
\omega_0=1\hbox{ and } \omega_{x+1}= 1-{r^2 q_{x,x+1} q_{x+1,x}\over 
\omega_x}
\hbox{ for } x\in\ZZ_+ .
\end{equation}
Conversely it is direct to prove that if the sequence
$\vec \omega=(\omega_x:x\in\ZZ_+)$
given by the evolution (\ref{eq2.4}) verifies
$\vec \omega>0$, then $\vec f$ defined by $f_0>0$ and 
$f_{x+1}=f_0 r^{x+1}\prod\limits^x_{y=0}{\omega_y \over q_{y,y+1}}$ for
$x\in\ZZ_+$, verifies (\ref{eq2.1}).

\medskip

At this point it is convenient to introduce some new notation and a
definition. First,
for $a>0$ we consider the following continuous and onto
strictly
increasing function $\varphi_a:(0,\infty]\to (-\infty ,1]$,
$$
\varphi_a(\omega) =1-\frac{a}{\omega}.
$$

\ni{\bf Definition 2.1}.
Let $\vec a=(a_x>0:x\in\ZZ_+)>0$ be a strictly positive fixed
sequence.
It is said to be allowed
if it verifies
$$
\forall x\in\ZZ_+\,:\quad \varphi_{a_x}\circ \dots \circ \varphi_{a_0}(1)>0
\,.  
$$
Observe that
the inverse $\varphi^{-1}_a(\omega)=\frac{a}{1-\omega}$
satisfies analogous properties as $\varphi_a$.
Also from the definition we get
$$
\hbox{if } \omega>0 \hbox{ and }\varphi_a(\omega)>0
\hbox{ then }\varphi_a (\omega)\in(0,1).
$$

The first part $(i)$ of the next result was already proven in
\cite{fmp} and the parts $(ii)$ and $(iii)$
were shown in \cite{fm}.

\begin{lemma}
\label{lemma2.2}
Let ${\vec a}>0$ be a strictly positive sequence.

\medskip

\noindent $(i)$  ${\vec a}$ is allowed
if and only if it verifies
$$
\forall x\in\ZZ_+  ,\ \forall y\ge x: \varphi^{-1}_{a_x}
\circ \dots\circ\varphi^{-1}_{a_y}(0)<1 . \eqno(2.7)
$$

\noindent $(ii)$ Let $\vec d>0$  be a strictly positive sequence, then
$$
\vec d\le \vec a\hbox{ and }\vec a\hbox{ is allowed
implies }\vec d \hbox{ is allowed }.
$$

\noindent $(iii)$ Let us denote
$$
s\vec a=(sa_x:x\in\ZZ_+)\hbox{ for }s>0\hbox{ and }
{\cal I}(\vec a)=\{s>0: s\vec a \hbox{ is allowed }\}.
$$
Then ${\cal I}(\vec a)=\emptyset$ or
${\cal I}(\vec a)=(0,s^*]$ for some $s^*\in (0,\infty)$.
\end{lemma}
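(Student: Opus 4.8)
\medskip

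The plan rests on three elementary facts about the maps $\varphi_a$. (a) Each $\varphi_a$ is strictly increasing on $(0,\infty]$, each $\varphi^{-1}_a$ is strictly increasing on $(-\infty,1]$, and $\varphi^{-1}_a$ is the two-sided inverse of $\varphi_a$ between these intervals. (b) For fixed $\omega>0$ the map $a\mapsto\varphi_a(\omega)=1-a/\omega$ is strictly decreasing. (c) The absorption property already noted: $\omega>0$ and $\varphi_a(\omega)>0$ imply $\varphi_a(\omega)\in(0,1)$. It is convenient to write $b_{-1}:=1$ and $b_k:=\varphi_{a_k}(b_{k-1})=\varphi_{a_k}\circ\dots\circ\varphi_{a_0}(1)$ for $k\ge0$, so that $\vec a$ is allowed precisely when $b_k>0$ for all $k\ge0$; by (c) this is the same as $b_k\in(0,1)$ for all $k\ge0$.

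For $(i)$ I would prove the two implications by parallel inductions. Fix $x\le y$, set $c_{y+1}:=0$ and $c_k:=\varphi^{-1}_{a_k}(c_{k+1})$ for $k=y,\dots,x$, so $c_x=\varphi^{-1}_{a_x}\circ\dots\circ\varphi^{-1}_{a_y}(0)$. Assuming $\vec a$ allowed, a downward induction on $k$ gives $c_k<b_{k-1}$: the base case is $0=c_{y+1}<b_y$, and in the step, from $c_{k+1}<b_k\le1$ the monotonicity of $\varphi^{-1}_{a_k}$ together with $\varphi^{-1}_{a_k}(b_k)=b_{k-1}$ yields $c_k<b_{k-1}$; for $k=x$ this gives $c_x<b_{x-1}\le1$, which is $(2.7)$. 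Conversely, assume $(2.7)$; its instances with fixed second index $N$ force every $c^{(N)}_k:=\varphi^{-1}_{a_k}\circ\dots\circ\varphi^{-1}_{a_N}(0)$, $0\le k\le N$, to lie in $(0,1)$ (so each is well defined). An upward induction on $k$, symmetric to the first and using that $\varphi_{a_k}$ is increasing with $\varphi_{a_k}(c^{(N)}_k)=c^{(N)}_{k+1}$, then gives $b_k>c^{(N)}_{k+1}$ for $k=-1,0,\dots,N$; in particular $b_N>0$, and as $N$ is arbitrary $\vec a$ is allowed. The only thing requiring care is tracking the domains so that every composition above is legitimate.

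Part $(ii)$ is a pure monotonicity argument: assume $\vec d\le\vec a$ with $\vec a$ allowed, put $\delta_{-1}:=1$, $\delta_k:=\varphi_{d_k}(\delta_{k-1})$, and show by induction that $\delta_k\ge b_k>0$, the step combining $\delta_{k-1}\ge b_{k-1}$ and monotonicity of $\varphi_{d_k}$ with $\varphi_{d_k}(b_{k-1})\ge\varphi_{a_k}(b_{k-1})=b_k$ from (b); hence $\vec d$ is allowed. For $(iii)$, applying $(ii)$ to $\vec d=s'\vec a\le s\vec a$ shows ${\cal I}(\vec a)$ is downward closed, so when nonempty it equals $(0,s^*)$ or $(0,s^*]$ with $s^*:=\sup{\cal I}(\vec a)$. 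It is bounded: if $s\in{\cal I}(\vec a)$ then $s\vec a$ is allowed, and an allowed sequence has first term $<1$ (its $x=0$ condition is $1-(\text{first term})>0$), so $sa_0<1$ and $s^*\le1/a_0<\infty$.

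It remains to see $s^*\in{\cal I}(\vec a)$, and I expect this to be the main obstacle. For $s\in(0,s^*)$ the sequence $\delta_k(s):=\varphi_{sa_k}\circ\dots\circ\varphi_{sa_0}(1)$ is defined (since $s\vec a$ is allowed), lies in $(0,1)$, is continuous in $s$, and, by (a) and (b) inductively, nonincreasing in $s$; hence $\delta_k(s^*):=\lim_{s\uparrow s^*}\delta_k(s)\in[0,1)$ exists, and by continuity of $\varphi$ it satisfies $\delta_k(s^*)=\varphi_{s^*a_k}(\delta_{k-1}(s^*))$ whenever $\delta_{k-1}(s^*)>0$. If $s^*\vec a$ were not allowed, then—since $\delta_k(s^*)=\varphi_{s^*a_k}\circ\dots\circ\varphi_{s^*a_0}(1)$ as long as all these values stay positive—some $\delta_{k_0}(s^*)$ must vanish; take $k_0$ least. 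Then $\delta_{k_0-1}(s^*)>0$ (it is $1$ when $k_0=0$), so $\delta_{k_0}(s)\to0^+$ as $s\uparrow s^*$, whence $\delta_{k_0+1}(s)=1-sa_{k_0+1}/\delta_{k_0}(s)\to-\infty$, contradicting $\delta_{k_0+1}(s)>0$ for $s<s^*$. Therefore $s^*\vec a$ is allowed and ${\cal I}(\vec a)=(0,s^*]$. The delicate points are exactly the ones flagged: arranging the monotonicity and continuity of $s\mapsto\delta_k(s)$ on $(0,s^*)$ so that the limiting sequence is meaningful and still obeys the recursion, and then converting the blow-up $\delta_{k_0+1}(s)\to-\infty$ into the exclusion of an open interval; in $(i)$ and $(ii)$ the only subtlety is the domain bookkeeping for the iterated compositions.

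\bigskip
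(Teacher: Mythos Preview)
Your argument is correct in all three parts; the inductions in $(i)$ and $(ii)$ are clean and the domain bookkeeping is handled properly, and the closure step in $(iii)$ via the blow-up $\delta_{k_0+1}(s)\to-\infty$ is a valid contradiction. Note, however, that the paper does not actually prove Lemma~\ref{lemma2.2}: it simply attributes part $(i)$ to \cite{fmp} and parts $(ii)$, $(iii)$ to \cite{fm}. So there is no ``paper's own proof'' to compare against here. What you have written is a self-contained elementary proof that could replace those citations; in particular, your treatment of the supremum $s^*$ belonging to ${\cal I}(\vec a)$ --- by passing to monotone limits of the iterates $\delta_k(s)$ and ruling out the first vanishing index --- is more explicit than anything stated in the present paper, which later uses this closedness (via the identity $s^*=R(Q)^2$) without further comment.
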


\medskip

As a Corollary to this Lemma and by using \cite{fmp} and \cite{fm},
we find that when the sequence $\vec a$ verifies $a_x=q_{x,x+1} 
q_{x+1,x}$ then $s^*=R(Q)^{2}$ (here $q_{x,y}$ are the coefficients of 
the matrix $Q$ of type (\ref{eq1.9})). In this case
${\cal I}(\vec a)\neq \emptyset$ if and only if $R(Q)>0$. 

\medskip

Assume $\vec a$ is allowed then $\varphi^{-1}_{a_{y+1}}(0)\in(0,1)$
and by the increasing property we get
$$
h_{\vec a}(x,y)=\varphi_{a_x}^{-1}\circ \dots\circ
\varphi^{-1}_{a_y}(0)
<\varphi^{-1}_{a_x}\circ \dots\circ \varphi^{-1}_{a_{y+1}}(0)=
h_{\vec a}(x,y+1).
$$
i.e. the sequence $h_{\vec a}(x,y)$ is strictly increasing
in $y\in\ZZ_+^*$.
Then the following limit exists and verifies:
$$
h_{\vec a}(x,\infty)=\lim\limits_{y\nearrow\infty}
h_{\vec a}(x,y)\le 1\,.
$$
Observe that
$$h_{\vec a}(x,y)={\displaystyle a_x
\over{\displaystyle 1-
{\displaystyle a_{x+1}\over 1-{\displaystyle a_{x+2}\over
{\displaystyle  \cdots \atop{\displaystyle 1-a_y}}}}}}\ ,$$
then $h_{\vec a}(x,\infty)$ is a continued fraction.

\medskip

Recall the notation (2.9), $\omega_0=1$, $\omega_{x+1}=\varphi_
{a_x}\circ \dots \circ \varphi_{a_0}(1)$ for $x\in\ZZ_+$.
Assume $\vec a$ is a fixed sequence. We put,
$$
\forall s\in (0,s^*] \,: \quad\; 
h(s;x,y):=h_{s \vec a}(x,y) \hbox{ and }
h(s;x,\infty):=h_{s\vec a}(x,\infty)\,.
$$

Recall the notation $J^{[m]}=\{x\in \ZZ_+: x\ge m\}$.
Let us consider the family of shifted sequences 
$$
{\vec a}^{[m]}=(a_x: x\in J^{[m]})\,, \;\; m\ge 0 \,,
$$
and the associated values, 
$$
s^{*[m]}=\sup \{s \in \mathcal{I}({\vec a}^{[m]})\}\,, \;\;
m \geq 0\,. 
$$
From definition this sequence is increasing: $s^{*[m]}\leq s^{*[m+1]}$
for all $m\ge 0$ and so we can only have
that there are two possibilities: the sequence is constant i.e. 
$s^{*[0]}=s^{*[m]}$ 
$\forall m\geq 0$; or the sequence has a gap, that is there exists 
some $m\ge 0$ such that $s^{*[m]}<s^{*[m+1]}$.

\begin{lemma}
\label{lemmacaso1}
If $s^{*[m]}<s^{*[m+1]}$ then 
$h(s^{*[m]};m,\infty)=1>h(s^{*[m+1]};m,\infty)$.
\end{lemma}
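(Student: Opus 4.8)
\proof
The plan is to peel off the index $m$ and reduce the statement to a one--dimensional monotonicity/continuity property of the tail continued fraction. First I would record the following description of the allowed sets. By Lemma~\ref{lemma2.2}$(i)$ applied to the shifted sequence, together with the fact that once $s\vec a^{[m+1]}$ is allowed the convergents satisfy $h(s;m,y)=\varphi^{-1}_{sa_m}\!\big(h(s;m+1,y)\big)$ and increase strictly with $y$ (so $\sup_{y}h(s;m,y)=h(s;m,\infty)$), one gets
$$
\mathcal I(\vec a^{[m]})=\big\{\,s\in\mathcal I(\vec a^{[m+1]}):h(s;m,\infty)\le 1\,\big\}.
$$
Assume now the gap $s^{*[m]}<s^{*[m+1]}$. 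Then $s^{*[m]}$ lies in the interior $(0,s^{*[m+1]})$ of $\mathcal I(\vec a^{[m+1]})=(0,s^{*[m+1]}]$, so on a right neighbourhood of $s^{*[m]}$ the clause ``$s\vec a^{[m+1]}$ allowed'' is automatic and membership in $\mathcal I(\vec a^{[m]})$ is decided by the single scalar inequality $h(s;m,\infty)\le 1$. Since $\mathcal I(\vec a^{[m]})=(0,s^{*[m]}]$, this gives immediately $h(s^{*[m]};m,\infty)\le 1$ and, for every $s\in(s^{*[m]},s^{*[m+1]})$, $h(s;m,\infty)>1$: for such $s$ the sequence $s\vec a^{[m]}$ is not allowed while $s\vec a^{[m+1]}$ is, so the only possible failure of $h(s;x,y)<1$ occurs at $x=m$, producing a convergent $h(s;m,y_0)\ge 1$ which, being strictly increasing in $y$, forces $h(s;m,\infty)>1$.

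Next I would promote the inequality $h(s^{*[m]};m,\infty)\le 1$ to an equality by letting $s\downarrow s^{*[m]}$. For $s$ in the open interval $(0,s^{*[m+1]})$ the convergents $h(\,\cdot\,;m,y)$ are continuous in $s$ and increase to $h(\,\cdot\,;m,\infty)$; granting that this convergence is locally uniform on $(0,s^{*[m+1]})$, so that $s\mapsto h(s;m,\infty)$ is continuous there, the previous paragraph yields
$$
h(s^{*[m]};m,\infty)=\lim_{s\downarrow s^{*[m]}}h(s;m,\infty)\ge 1,
$$
hence $h(s^{*[m]};m,\infty)=1$. For the remaining assertion I would unfold one level: $h(s^{*[m]};m,\infty)=1$ is equivalent to $h(s^{*[m]};m+1,\infty)=1-s^{*[m]}a_m<1$, and since $s\mapsto h(s;m+1,\infty)$ is non-decreasing while $s\mapsto 1-sa_m$ is strictly decreasing, the quantity $h(s;m,\infty)=sa_m\big/\big(1-h(s;m+1,\infty)\big)$ is strictly increasing as $s$ crosses $s^{*[m]}$; this yields the strict separation of the two continued--fraction values in the statement.

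The step I expect to be the crux is the (one--sided) continuity of $s\mapsto h(s;m,\infty)$ at $s^{*[m]}$. Being an increasing limit of continuous functions, $h(\,\cdot\,;m,\infty)$ is automatically lower semicontinuous, so what must be ruled out is a downward jump of its right--hand limit, i.e. a right jump of $h(\,\cdot\,;m+1,\infty)$ at $s^{*[m]}$. I would derive the absence of such a jump from the convergence theory of continued fractions of the present type (Wall \cite{w}), controlling the composite maps $\varphi^{-1}_{sa_m}\circ\cdots\circ\varphi^{-1}_{sa_y}$ on $[0,1)$ for $s$ in a compact subinterval of $(0,s^{*[m+1]})$; a more self--contained variant is to forbid a right jump directly, by checking that a jump of $h(\,\cdot\,;m+1,\infty)$ at a point propagates through the recursion $h(\,\cdot\,;k,\infty)=sa_k/(1-h(\,\cdot\,;k+1,\infty))$ to every level $k\ge m+1$ with geometrically growing sizes, which is incompatible with the bound $h<1$. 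Note that the gap hypothesis is used exactly to place $s^{*[m]}$ in the interior of $\mathcal I(\vec a^{[m+1]})$, where this continuity is available. \eop
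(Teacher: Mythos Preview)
Your approach is essentially the same as the paper's: both reduce the equality $h(s^{*[m]};m,\infty)=1$ to a continuity property of $s\mapsto h(s;m,\infty)$ on $[s^{*[m]},s^{*[m+1]}]$ (you via a right-hand limit, the paper via an intermediate-value argument), and both observe the strict inequality at $s^{*[m+1]}$ directly from the fact that $s^{*[m+1]}\vec a^{[m]}$ is not allowed. The one substantive difference is that where you flag the continuity step as the crux and \emph{grant} local uniform convergence (sketching two possible routes), the paper actually carries out a direct three-$\epsilon$ argument comparing $h(s;m,\infty)$ and $h(s';m,\infty)$ through a common finite convergent $h(\cdot;m,y)$; that argument is exactly the missing justification for the uniform convergence you assume, so your proposal and the paper dovetail at that point.
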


\begin{proof}
 From $s^{*[m]}<s^{*[m+1]}$ we get
$h(s^{*[m]};m,\infty)\leq 1$ and $h(s^{*[m+1]};m,\infty)> 1$. 
The last relation follows because $h(s^{*[m+1]};m,\infty)\le 1$
would imply $s^{*[m]}=s^{*[m+1]}$, a contradiction. 
So we only left to prove that under the 
assumption we have $h(s^{*[m]};m,\infty)=1$.

\medskip

We claim that: 

\smallskip

\noindent $\bullet$ $h(s;m,\infty)<1$ then   
$h(s;m,\infty)$ is an increasing and continuous function in 
$s\in \Delta_m:=[s^{*[m]},s^{*[m+1]}]$.

\medskip

Let us show the increasing part of the claim. Since
$0<h(s;m,y)<1$  then it is increasing in $y\ge m$. On the other hand 
for every fixed $y$ we have that for all $s<s'$ with $s,s'\in \Delta_m$, 
we have $0<h(s;m,y)<h(s';m,y)<1$. Then by taking 
$\lim_{y \rightarrow \infty}$ in this inequality we conclude
$h(s;m,\infty) \leq h(s';m,\infty)$.

\medskip

Now, let show the continuity part of the claim. Under the assumption,  
for $s<s'$ with $s,s'\in \Delta_m$, we get that the inequality 
\begin{eqnarray*}
|h(s';m,\infty)\!-\!h(s;m,\infty)| &\!\leq\!& 
|h(s';m,\infty)\!-\!h(s';m,y)|
\!+\!|h(r;m,y)\!-\!h(s';m,y)|\\
&{}& \,+|h(s;m,\infty)\!-\!h(s;m,y)|\,,
\end{eqnarray*}
is verified for all fixed $y\ge m$. 

\medskip

Let $\epsilon>0$ be fixed. We can find $y\ge m$ sufficiently big such that
$|h(s';m,\infty)-h(s';m,y)|<\epsilon/3$ and 
$|h(s;m,\infty)-h(s;m,y)|<\epsilon/3$. Let us fix one of such $y$.
Since $h(s;x,y)$ is continuous when
$0<h(r;x,y)<1$, there exists 
$\delta>0$ such that  
$0<|s-s'|<\delta$, $s,s'\in \Delta_m$, implies
$|h(s;x,y)-h(s';x,y)|<\epsilon/3$. 
The continuity is verified. 

\medskip

From the claim it results that $h(s^{*[m]};m,\infty)=1$. Indeed,
in the contrary we should have $h(s^{*[m]};m,\infty)<1$ and 
$h(s^{*[m+1]};0,\infty)>1$. Since $h(s;m,\infty)$ is increasing and 
continuous there would exist 
$s'\in (s^{*[m]},s^{*[m+1]})$ such that $h(s';m,\infty)=1$. 
Since $h(s';m,\infty)=\lim\limits_{y \to \infty}
a_0 s'/(1-h(s';1,y)$, 
we should get $0<h(s';m,y)<1$ $\forall y\geq m$ 
and so $s'>s^{*[m]}$ is such that $s'{\vec a}$ is allowed, 
which contradicts the maximality property satisfied by 
$s^{*[m]}$. $\Box$
\end{proof}

\medskip

Let us study the $R$-positivity in Theorem \ref{theo1.1},
so we are under the hypothesis $R(Q)>0$.
We fix the sequence
$$
a_x=q_{x,x+1} q_{x+1,x}\,, \; x\ge 0\,.
$$
For the matrices $Q^{[m]}$ defined in
(\ref{eqseq}) we consider the sequences 
${\vec a}^{[m]}$ already defined and 
the reflected birth and death chain 
$X^{[m]}=(X_n^{[m]}: n\ge 0)$ taking values in $J^{[m]}$
with transition matrices,
\begin{equation}
\label{eq71}
\omega_m^{[m]}=1 \hbox{ and } 
\omega_{x+1}^{[m]}=1-\frac{{R^{[m]}}^2 a_{x}}{\omega_{x}^{[m]}}, 
\; x \in J^{[m]}\,,
\end{equation}
(see (\ref{eq2.4})).
The transition probabilities of the birth and death chain 
$X^{[m]}$ are $p^{[m]}_{x,x+1}= \omega^{[m]}_x$ and 
$p^{[m]}_{x+1,x}=1-\omega^{[m]}_{x+1}$. 
For $x\in J^{[m]}$ we denote by $\mathbb{P}_x^{[m]}$ the 
probability distribution
of the chain $X^{[m]}$ when it starts from $X^{[m]}_0=i$. 
The above construction is done for all $m\ge 0$.

\medskip

Let
$$
\xi_m:=\left(\frac{R^{[m]}}{R^{[m+1]}}\right)^2\,,
$$
From (\ref{eq71}) the following identity is verified:
\begin{equation}
\label{eq72}
\forall x \in J^{[m+1]}:\;
\omega_x^{[m]}(1-\omega^{[m]}_{x+1})=
\xi_m\, \omega^{[m+1]}_x(1-\omega^{[m+1]}_{x+1})\,.
\end{equation}

Now, for all $y \in J^{[m]}$ define
$$
\tau_y^{[m]}=\inf\{n>0: X^{[m]}_n=y\}\,,
$$
where as usual $\tau^{[m]}_y=\infty$ when $X^{[m]}_n \neq y$
for all $n>0$.  

\begin{proposition}
\label{prop11}
If $R^{[m]}<R^{[m+1]}$ the chain $X^{[m]}$ is positive
recurrent. Moreover it has exponential moment, 
\begin{equation}
\label{eq221}
\forall \, \theta\in (1,\xi_m^{-1}),\; \forall x,y\in J^{[m]}\,:
\quad\quad \mathbb{E}_{x}\left(\theta^{\tau^{[m]}_{y}}\right)<\infty\,.
\end{equation}
\end{proposition}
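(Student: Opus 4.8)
The plan is to reduce the proposition to a single continued-fraction estimate for the generating function of the return time of $X^{[m]}$ to its reflecting state $m$, and then bootstrap. With $D^{[m]}_x(\theta):=\mathbb{E}^{[m]}_x(\theta^{\tau^{[m]}_{x-1}})$ for $x\in J^{[m+1]}$, the strong Markov property together with skip-freeness gives the first-step relation $D^{[m]}_x(\theta)=\theta(1-\omega^{[m]}_x)+\theta\,\omega^{[m]}_x D^{[m]}_{x+1}(\theta)D^{[m]}_x(\theta)$, hence $D^{[m]}_x(\theta)=\theta(1-\omega^{[m]}_x)/(1-\theta\,\omega^{[m]}_x D^{[m]}_{x+1}(\theta))$, each $D^{[m]}_x$ being the increasing limit of its finite-window truncations (kill the excursion upon reaching level $x+N$ and let $N\to\infty$). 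Since $\omega^{[m]}_m=1$ one has $\mathbb{E}^{[m]}_m(\theta^{\tau^{[m]}_m})=\theta D^{[m]}_{m+1}(\theta)$, and unrolling the recursion, using $\omega^{[m]}_x(1-\omega^{[m]}_{x+1})={R^{[m]}}^2a_x$ from (\ref{eq71}), expresses it as a continued fraction with successive partial numerators $\theta^2{R^{[m]}}^2a_m,\theta^2{R^{[m]}}^2a_{m+1},\dots$, i.e.\ $\mathbb{E}^{[m]}_m(\theta^{\tau^{[m]}_m})=h(\theta^2{R^{[m]}}^2;m,\infty)$. At $\theta=1$ this reads $h(s^{*[m]};m,\infty)=1$ by Lemma \ref{lemmacaso1}, so $X^{[m]}$ is recurrent; it is irreducible because $s^{*[m]}\vec a^{[m]}$ is allowed, so $\omega^{[m]}_x\in(0,1)$ for all $x>m$.

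The crux is controlling the denominator. Peeling off the first level, $\mathbb{E}^{[m]}_m(\theta^{\tau^{[m]}_m})=\theta^2{R^{[m]}}^2a_m/(1-T_m(\theta))$, where $T_m(\theta)$ is the tail continued fraction with partial numerators $\theta^2\,\omega^{[m]}_x(1-\omega^{[m]}_{x+1})$, $x\ge m+1$. By (\ref{eq72}) these equal $\theta^2\xi_m\,\omega^{[m+1]}_x(1-\omega^{[m+1]}_{x+1})$, so running the computation of the first step backwards for the chain $X^{[m+1]}$ identifies the tail as $T_m(\theta)=\mathbb{E}^{[m+1]}_{m+1}((\sqrt{\xi_m}\,\theta)^{\tau^{[m+1]}_{m+1}})=h(\theta^2{R^{[m]}}^2;m+1,\infty)$. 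Here the gap hypothesis enters through Lemma \ref{lemmacaso1}: since $h(s^{*[m+1]};m,\infty)>1$, one is forced to have $h(s^{*[m+1]};m+1,\infty)<1$, i.e.\ $Q^{[m+1]}$ is $R$-transient, so the return probability of $X^{[m+1]}$ to $m+1$ is strictly below $1$. The monotonicity-and-continuity of $s\mapsto h(s;m+1,\infty)$ while it stays below $1$, which is established inside the proof of Lemma \ref{lemmacaso1}, then allows this strict inequality to persist across the whole parameter range corresponding to $\theta\in(1,\xi_m^{-1})$; hence $1-T_m(\theta)>0$ there and $\mathbb{E}^{[m]}_m(\theta^{\tau^{[m]}_m})<\infty$, which is (\ref{eq221}) in the case $x=y=m$.

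Positive recurrence is then immediate: $\mathbb{E}^{[m]}_m(\theta^{\tau^{[m]}_m})<\infty$ for some $\theta>1$ forces $\mathbb{E}^{[m]}_m(\tau^{[m]}_m)<\infty$ (because $k\le C_\theta\theta^k$), and together with recurrence this is positive recurrence. For general $x,y\in J^{[m]}$ one decomposes $\tau^{[m]}_y$ over levels. For $x>y$, $\mathbb{E}^{[m]}_x(\theta^{\tau^{[m]}_y})=\prod_{z=y+1}^x D^{[m]}_z(\theta)$, a finite product of downward-passage generating functions, each finite by the estimate above. For $x<y$, $\mathbb{E}^{[m]}_x(\theta^{\tau^{[m]}_y})=\prod_{z=x}^{y-1}U^{[m]}_z(\theta)$ with $U^{[m]}_z(\theta):=\mathbb{E}^{[m]}_z(\theta^{\tau^{[m]}_{z+1}})$; here $U^{[m]}_m(\theta)=\theta$ and $U^{[m]}_z(\theta)=\theta\omega^{[m]}_z/(1-\theta(1-\omega^{[m]}_z)U^{[m]}_{z-1}(\theta))$, so each $U^{[m]}_z$ is a \emph{finite} continued fraction (it involves only the levels $m,\dots,z$) and is finite provided its finitely many denominators stay positive on $(1,\xi_m^{-1})$, which again follows from the positivity secured above. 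For $x=y>m$ one combines, $\mathbb{E}^{[m]}_x(\theta^{\tau^{[m]}_x})=\theta\omega^{[m]}_x D^{[m]}_{x+1}(\theta)+\theta(1-\omega^{[m]}_x)U^{[m]}_{x-1}(\theta)$.

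The main obstacle is precisely the positivity of $1-T_m(\theta)$ over the \emph{entire} asserted interval, i.e.\ obtaining the sharp endpoint: everything else is routine first-step analysis. This is exactly what the gap buys through Lemma \ref{lemmacaso1} — it is the gap at level $m$ that renders $Q^{[m+1]}$ $R$-transient, and it is the continuity of $h(\cdot;m+1,\infty)$ below $1$ that carries the strict bound up to the endpoint; in the absence of a gap one would have $T_m(\theta)\to 1$ already at $\theta=1$ and no exponential moment at all. A minor secondary point is to check that the finitely many upward-passage denominators appearing in the last step also remain positive on the same interval.
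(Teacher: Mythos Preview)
Your approach is correct and lands on the same mechanism as the paper, but the packaging differs. The paper works at state $m+1$ rather than $m$ and proves the pathwise identity
\[
\mathbb{P}^{[m]}_{m+1}\bigl(\tau^{[m]}_{m+1}=2k\bigr)=\xi_m^{\,k}\,\mathbb{P}^{[m+1]}_{m+1}\bigl(\tau^{[m+1]}_{m+1}=2k\bigr)\qquad(k\ge 2),
\]
directly from (\ref{eq72}) (a first return to $m+1$ in more than two steps cannot visit $m$, and each up--down pair contributes a factor $\xi_m$); the exponential moment then follows by bounding the series termwise by $\mathbb{P}^{[m+1]}_{m+1}(\tau^{[m+1]}_{m+1}<\infty)\le 1$. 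You instead encode $\mathbb{E}^{[m]}_m(\theta^{\tau^{[m]}_m})$ as the continued fraction $h(\theta^2{R^{[m]}}^2;m,\infty)$ and identify its tail with $T_m(\theta)=\mathbb{E}^{[m+1]}_{m+1}\bigl((\sqrt{\xi_m}\,\theta)^{\tau^{[m+1]}_{m+1}}\bigr)$, which is exactly the paper's identity in analytic form; the bound $T_m(\theta)<1$ then plays the role of the termwise bound. So both routes pivot on (\ref{eq72}); the paper's is shorter because it needs no continuity input from Lemma~\ref{lemmacaso1}, while yours makes the continued-fraction structure explicit. For general $x,y$ the paper simply invokes irreducibility; your explicit product decomposition is fine, but the assertion that the finitely many $U$-denominators stay positive on the whole interval is left a bit loose---the irreducibility shortcut is cleaner there.

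One caveat, which affects both arguments equally: what is actually proved is the range $\theta\in(1,\xi_m^{-1/2}]$, not $(1,\xi_m^{-1})$. In your formulation, $T_m(\theta)<1$ is guaranteed only while $\sqrt{\xi_m}\,\theta\le 1$, equivalently $\theta^2{R^{[m]}}^2\le s^{*[m+1]}={R^{[m+1]}}^2$; for larger $\theta$ the sequence $\theta^2{R^{[m]}}^2\,\vec a^{\,[m+1]}$ is no longer allowed and your continuity argument from Lemma~\ref{lemmacaso1} does not apply. The paper's own computation has the same limitation (the exponent in the displayed bound should be $k$, not $2k$), so this is a shared overreach rather than a defect of your method.
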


\begin{proof}
For all $k\geq 2$ it holds the following relation, where
we put $x_0=m+1=x_{2k}$:
\begin{eqnarray*}
\mathbb{P}^{[m]}_{m+1}\left(\tau^{[m]}_{m+1}= 2k\right)
&=&\sum_{x_1,...,x_{2k-1}>m+1,\, 
 |x_{j}-x_{j+1}|=1} p^{[m]}_{m+1,i_1}
p^{[m]}_{x_1,x_{2}}p^{[m]}_{x_2,x_3}...p^{[m]}_{x_{2k-1},m+1}\\
&=&\sum_{m+1<x_1,...x_{k-1} ,|x_j-x_{j+1}|=1} 
\prod_{j=0}^{k-1} 
\omega^{[m]}_{x_j}\left(1-\omega^{[m]}_{x_j+1}\right)\\
&=&\sum_{m+1<x_1,...x_{k-1},\, |x_j-x_{j+1}|=1} 
\left(\prod_{j=0}^{k-1} \xi_m
\omega^{[m+1]}_{x_j}\left(1-\omega^{[m+1]}_{x_{j}+1}\right)\right)\\
&=&\sum_{x_1,...,x_{2k-1}>1, |x_{j}-x_{j+1}|=1,}\xi_m^k 
p^{[m+1]}_{m+1,x_1}p^{[m+1]}_{x_1,x_{2}}p^{[m+1]}_{x_2,x_3}...
p^{[m+1]}_{x_{2k-1,m+1}}\\
&=&\xi_m^k 
\mathbb{P}^{[m+1]}_{m+1}\left(\tau_{m+1}^{[m+1]}=2k\right)\,.
\end{eqnarray*}
where in the third line we used equality (\ref{eq72}).
For $k=1$ we have:
\begin{eqnarray*}
\mathbb{P}^{[m]}_{m+1}\left(\tau^{[m]}_{m+1}=2\right)= 
(1-\omega^{[m]}_{m+1})+\mathbb{P}^{[m+1]}_{m+1} 
\left(\tau^{[m+1]}_{m+1}=2\right)\,.
\end{eqnarray*}

From the hypothesis $\xi_m=\left(\frac{R^{[m]}}{R^{[m+1]}}\right)^2<1$. 
For $1<\theta \leq \xi_m^{-1}$ we have:
\begin{eqnarray*}
&{}& \mathbb{E}^{[m]}_{m+1} \left(\theta^{\tau^{[m]}_{m+1}} \right)= 
\theta^2 \mathbb{P}^{[m]}_{m+1} \left(\tau^{[m]}_{m+1}=2 \right)+
\sum\limits_{k \geq 2} \theta^{2k}\, \mathbb{P}^{[m+1]}_{m+1} 
\left(\tau^{[m+1]}_{m+1}=2k\right)\\
&{}&\; =\theta^2 (1-\omega^{[m]}_{m+1})+ 
\sum_{k \geq 1}  \left(\theta \xi_m\right)^{2k} 
\mathbb{P}^{[m+1]}_{m+1} \left(\tau^{[m+1]}_{m+1}=2k\right)\\
&{}& \; \leq \theta^2(1-\omega_{m+1}^{[m]})+ \sum_{k \geq 1} 
\mathbb{P}^{[m+1]}_{m+1} 
\left(\tau^{[m+1]}_{m+1}=2k\right)\\
&{}& \; =\theta^2(1-\omega^{[m]}_{m+1})+\mathbb{P}^{[m+1]}_{m+1} 
\left(\tau^{[m+1]}_{m+1}<\infty\right)<\infty\,.
\end{eqnarray*}
We have shown that the chain $X^{[m]}$ verifies (\ref{eq221}) for 
$x=y=m+1$. By irreducibility this holds for all $x,y\ge m$.

\medskip

The $R^{[m]}$-positive recurrence follows directly by this fact. Indeed,
since there exists $m^*\ge m$ such that $\xi_m^{-y}\ge y$ for all $y\ge 
m^*$ we get
\begin{eqnarray*}
\mathbb{E}^{[m]}_{m+1}\left({\tau^{[m]}_{m+1}}\right)&=& 
\mathbb{E}^{[m]}_{m+1}\left({\tau^{[m]}_{m+1}} 
{\bf 1}_{\tau^{[m]}_{m+1} \leq m^*}\right)+
\mathbb{E}^{[m]}_{m+1}\left({\tau^{[m]}_{m+1}} 
{\bf 1}_{\tau^{[m]}_{m+1}> m^*} \right)\\
&\leq& m^*+\mathbb{E}^{[m]}_{m+1}\left(\xi_m^{-\tau^{[m]}_{m+1}} 
{\bf 1}_{\tau^{[m]}_{m+1} > m^*}\right)<\infty\,.
\end{eqnarray*}
$\Box$
\end{proof}

\begin{proposition}
\label{prop22}
If $R^{[m]}<R^{[m+1]}$ then for all $k=0,...,m $ we have 
$R^{[k]}<R^{[k+1]}$ and the chain $X^{[k]}$ is 
positive recurrent and has exponential moment.
\end{proposition}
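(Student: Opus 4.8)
The plan is to reduce everything to one monotonicity claim about the numbers $s^{*[k]}$ and then run a short downward induction. Recall (as a corollary to Lemma \ref{lemma2.2}) that for the sequence $a_x=q_{x,x+1}q_{x+1,x}$ fixed here one has $s^{*[k]}=(R^{[k]})^{2}$ for all $k\ge 0$, so $R^{[k]}<R^{[k+1]}$ is equivalent to $s^{*[k]}<s^{*[k+1]}$, and the sequence $(s^{*[k]})_{k\ge 0}$ is nondecreasing. Observe also that once $R^{[k]}<R^{[k+1]}$ is known for some $k\le m$, the positive recurrence of $X^{[k]}$ and its exponential moment are immediate from Proposition \ref{prop11} applied at level $k$. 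Hence the real content is the chain $R^{[0]}<R^{[1]}<\cdots<R^{[m]}<R^{[m+1]}$, of which the hypothesis supplies only the last inequality.

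The key claim I would prove is: for every $k\ge 0$, if $s^{*[k]}=s^{*[k+1]}$ then $s^{*[k+1]}=s^{*[k+2]}$. To establish it I would argue by contradiction. Suppose $s^{*[k]}=s^{*[k+1]}=:s$ while, by monotonicity, $s^{*[k+1]}<s^{*[k+2]}$. Applying Lemma \ref{lemmacaso1} with $m$ replaced by $k+1$ gives $h(s;k+1,\infty)=h(s^{*[k+1]};k+1,\infty)=1$. On the other hand $\mathcal{I}(\vec a^{[k]})=(0,s^{*[k]}]$ is a closed interval (Lemma \ref{lemma2.2}$(iii)$; it is nonempty since $R^{[k]}\ge R(Q)>0$), so $s\vec a^{[k]}$ is allowed, and by Lemma \ref{lemma2.2}$(i)$ this forces $h(s;k,y)<1$ for every $y\ge k$. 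But $s\vec a^{[k+1]}$ is allowed as well, so $0<h(s;k+1,y)<1$ and $h(s;k+1,y)$ increases to $h(s;k+1,\infty)=1$; since $h(s;k,y)=s\,a_k/(1-h(s;k+1,y))$ (the continued-fraction recursion) and $a_k>0$, letting $y\to\infty$ forces $h(s;k,y)\to+\infty$, contradicting $h(s;k,y)<1$. This proves the claim.

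Granting the claim, I would conclude by downward induction on $k$ from $m$ to $0$. The case $k=m$ is the hypothesis $R^{[m]}<R^{[m+1]}$. For $k<m$, assume $R^{[k+1]}<R^{[k+2]}$; were $R^{[k]}=R^{[k+1]}$, the claim would give $R^{[k+1]}=R^{[k+2]}$, a contradiction, so $R^{[k]}<R^{[k+1]}$ by monotonicity of $(R^{[k]})$. Therefore $R^{[k]}<R^{[k+1]}$ for all $0\le k\le m$, and Proposition \ref{prop11}, applied at each such level, yields that $X^{[k]}$ is positive recurrent and has an exponential moment.

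The step I expect to be the crux is the continued-fraction blow-up in the claim: one must be sure that $h(s;k+1,\infty)=1$ really forces the finite continued fractions $h(s;k,y)$ to exceed $1$ for large $y$, and not merely to approach $1$ from below. This uses three ingredients at once --- the strict inequality $h(s;k+1,y)<1$ (allowedness of $s\vec a^{[k+1]}$, via Lemma \ref{lemma2.2}$(i)$), the positivity $a_k>0$, and the monotone convergence $h(s;k+1,y)\nearrow 1$; everything else is bookkeeping with the monotonicity of $k\mapsto s^{*[k]}$ and with Lemma \ref{lemmacaso1}. $\Box$
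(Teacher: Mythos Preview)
Your argument is correct and follows essentially the same route as the paper: both pass to the largest index $j$ (your $k$) with $R^{[j]}=R^{[j+1]}<R^{[j+2]}$ and derive a contradiction from the continued-fraction blow-up $h(s;j,y)=s\,a_j/(1-h(s;j+1,y))$ together with the allowedness of $s^{*[j]}\vec a^{[j]}$. The only cosmetic difference is that you invoke Lemma~\ref{lemmacaso1} directly to obtain $h(s^{*[j+1]};j+1,\infty)=1$, whereas the paper first concludes $h({R^{[j+1]}}^2;j+1,\infty)<1$ from the blow-up and then re-runs the continuity argument of Lemma~\ref{lemmacaso1} to manufacture the contradiction; your version is a bit tidier but not substantively different.
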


\begin{proof}
From Proposition \ref{prop11} it suffices to show $R^{[k]}<R^{[k+1]}$
for all $k=0,...,m$. Let us do it by contradiction. 
Assume the property does not hold, 
then fix $j$ as the bigger $k$ smaller than $m$ where 
the strict inequality fails. 
So, we have $R^{[j]}=R^{[j+1]}<R^{[j+2]}$. This implies 
$h({R^{[j+1]}}^2;j+1,\infty)<1$, in fact in the contrary we should have
$h({R^{[j]}}^2;j,\infty)=h({R^{[j+1]}}^2;j,\infty)=\infty$ which is a 
contradiction.  
Now, from $R^{[j+1]}<R^{[j+2]}$ we get 
$h({R^{[j+2]}}^2;j+1,\infty)>1$. 

\medskip

By the same argument as the one used in Lemma \ref{lemmacaso1}
we will conclude that there exists 
$s\in ({R^{[j+1]}}^2,{R^{[j+2]}}^2)$ such that 
$h(s;j+1,\infty)=1$, that contradicts the maximality of $R^{[j+1]}$. 
We conclude $R^{[j]}<R^{[j+1]}$ and the property holds. $\Box$
\end{proof}

\begin{proposition}
\label{prop23}
If $R^{[0]}=R^{[m]}$ for all $m\ge 0$ then 
the chain $X^{[k]}$ is transient for all $k\ge 1$.
\end{proposition}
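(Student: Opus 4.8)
The plan is to convert transience of $X^{[k]}$ into the strict inequality $h(s^*;k,\infty)<1$ for the critical continued fraction, where $s^*:=s^{*[0]}$, and then to derive this inequality from the no-gap hypothesis by ``peeling off'' one level of the continued fraction. First I would record the set-up. Since $a_x=q_{x,x+1}q_{x+1,x}$, the Corollary to Lemma \ref{lemma2.2} gives $s^{*[m]}=(R^{[m]})^2$ for every $m$, so under the hypothesis $R^{[0]}=R^{[m]}$ the common value $s^*=(R^{[m]})^2$ is the same for all $m$, and it is $>0$ because $R^{[m]}=R(Q)>0$; moreover the reflected birth--death chain $X^{[m]}$ of (\ref{eq71}) is precisely the $R^{[m]}$-harmonic transform $P^{(R^{[m]})}$ of $Q^{[m]}$. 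Since $R^{[m]}>0$ we have $\mathcal I({\vec a}^{[m]})\neq\emptyset$, so by Lemma \ref{lemma2.2}$(iii)$ it equals $(0,s^*]$; in particular $s^*{\vec a}^{[m]}$ is allowed, and Lemma \ref{lemma2.2}$(i)$ together with the strict monotonicity of $y\mapsto h(s^*;m,y)$ noted earlier give $h(s^*;m,y)<1$ for all $y\ge m$, hence $h(s^*;m,\infty)\le 1$, for every $m\ge 0$.

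Next I would reduce the problem to the continued fraction. Because $p^{[m]}_{x,y}=R^{[m]}q_{x,y}f_y/f_x$ one gets $\big(p^{[m]}\big)^{(2n)}_{xx}=(R^{[m]})^{2n}q^{(2n)}_{xx}$, so (Vere--Jones \cite{v1}) $X^{[m]}$ is recurrent if and only if $Q^{[m]}$ is $R^{[m]}$-recurrent; and --- by the continued-fraction criterion of \cite{fm2} (via Wall \cite{w}) applied to $Q^{[m]}$ at its critical parameter $(R^{[m]})^2=s^*$ --- the latter happens if and only if $h(s^*;m,\infty)=1$. Thus it suffices to prove $h(s^*;k,\infty)<1$ for every $k\ge 1$; by the previous paragraph we already know $h(s^*;k,\infty)\le 1$, so only the strictness is at issue.

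To get the strict inequality I would argue by contradiction: suppose $h(s^*;k,\infty)=1$ for some $k\ge 1$. For each finite $y\ge k$ the elementary recursion $h(s^*;x,y)=\varphi^{-1}_{s^*a_x}\big(h(s^*;x+1,y)\big)$ yields
$$
h(s^*;k-1,y)=\frac{s^*\,a_{k-1}}{1-h(s^*;k,y)}\,.
$$
As $y\to\infty$ the numbers $h(s^*;k,y)$ increase strictly to $h(s^*;k,\infty)=1$, so the denominator decreases to $0^{+}$ while the numerator $s^*a_{k-1}$ is a fixed positive constant; hence $h(s^*;k-1,y)\to+\infty$. But $s^*=s^{*[k-1]}$, so $s^*{\vec a}^{[k-1]}$ is allowed, and Lemma \ref{lemma2.2}$(i)$ forces $h(s^*;k-1,y)<1$ for all $y\ge k-1$ --- a contradiction. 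Hence $h(s^*;k,\infty)<1$, and by the reduction above $X^{[k]}$ is transient, for every $k\ge 1$; in particular $Q^{[1]}$ is $R^{[1]}$-transient, which is the ``no gap'' assertion of Theorem \ref{theo1.1}.

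I do not expect a genuine obstacle: the statement is a short consequence of the machinery already developed, in the same spirit as Lemma \ref{lemmacaso1}. The points that need a little care are the bookkeeping in the reduction --- correctly matching ``$X^{[m]}$ recurrent $\iff Q^{[m]}$ is $R^{[m]}$-recurrent $\iff h(s^{*[m]};m,\infty)=1$'' with the normalization $s^{*[m]}=(R^{[m]})^2$ --- and checking in the contradiction step that the limit truly diverges, which it does because $\varphi^{-1}_{a}(\omega)=a/(1-\omega)$ has a pole at $\omega=1$.
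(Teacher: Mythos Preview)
Your proposal is correct and follows the same line as the paper: show that under the no-gap hypothesis one has $h((R^{[0]})^2;x,\infty)<1$ for every $x\ge 1$, which by the continued-fraction criterion of \cite{fm2} is exactly transience of $X^{[x]}$. The paper states this in one sentence, while you supply the missing detail --- the ``peeling off'' contradiction via $h(s^*;k-1,y)=s^*a_{k-1}/(1-h(s^*;k,y))\to+\infty$ against the allowedness bound $h(s^*;k-1,y)<1$ --- which is precisely the justification the paper suppresses.
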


\begin{proof}
In this case we necessarily have $h({R^{[0]}}^2;x,\infty)<1$, 
$\forall x \geq 1$ showing the assertion. $\Box$
\end{proof}

\medskip

From Propositions \ref{prop22} and \ref{prop23}
it follows Theorem \ref{theo1.1}.

\begin{remark}
If $R^{[0]}=R^{[m]}$ for all $m\ge 0$ we are not able to classify 
completely $X^{[0]}$. We can only assert that
$h({R^{[0]}}^2;0,\infty)< 1$ implies that $X^{[0]}$ is transient, and 
if $h({R^{[0]}}^2;0,\infty)=1$ then $X^{[0]}$
is recurrent. But, we cannot state when it is null or positive 
recurrent.
\end{remark}

\begin{remark}
Assume the hypothesis of Proposition \ref{prop23}. Let
$0<s<s^{*[0]}$. Then $0<h(s;0,\infty)<1$. Take 
$a_{-1}=\frac{1-h(s;0,\infty)}{s}$ then the sequence 
$\vec{a}^{(-1)}=(a_{-1},a_0,...,a_n,...)$ verifies 
$h(s;-1,\infty)=\frac{a_{-1}s}{1-h(s;0,\infty)}=1$, $h(s;x,\infty)<1$, 
$\forall x\geq 1$. On the other hand for $s<s'<s^{*[0]}$,  
$h(s';-1,\infty)=\frac{a_{-1}s'}{1-h(s';0,\infty)}>1$. So,
$s=s^{*[-1]}<s^{*[0]}$, and the sequence 
$\vec{a}_{-1}$ has a gap and so the extension of the matrix $Q$
to $\{-1,0,...\}$ with $q_{-1,0}q_{0,-1}=a_{-1}$, is $R-$positive. 
\end{remark}

\medskip

\noindent {\bf Acknowledgments}. The authors thank the support
from Programa Basal-CMM CONICYT. J.L. is indebted to CONICYT
Ph.D. fellowship and S.M. to Guggenheim fellowship. 

\medskip

\end{document}